\newtheorem{theorem}{Theorem}[section]
\newtheorem{lemma}[theorem]{Lemma}
\newtheorem{corollary}[theorem]{Corollary}
\newtheorem{definition}[theorem]{Definition}
\newtheorem{example}{Example}[section]
\newenvironment{proof}{{\par\addvspace{0.1cm}\noindent \bf Proof. }}{\hfill$\Box$\par\medskip} 
\newenvironment{proofoftheorem}{{\par\noindent \bf Proof of Theorem}\hspace{0.1cm}}{\hfill$\Box$\par\medskip} 
\title{Conformal dual of a quadruplet of points}
\author{Jun O'Hara}
\begin{document}

\maketitle

\begin{abstract} 
We define a ``dual'' of a quadruplet of points in $S^3$ in a conformal geometric way. 
We show that the dual of a dual quadruplet coincides with the original one. 
We also show that the cross ratio of the dual quadruplet is equal to the complex conjugate of that of the original one. 
\end{abstract}

\medskip
{\small {\it Key words and phrases}. Dual, conformal geometry, cospherical}

{\small 1991 {\it Mathematics Subject Classification.} 53A30}

\section{The non-cocircular case}
We start with the non-cocircular case. 
The cocircular case will be studied in section \ref{sec_cocircular}. 

\subsection{Definition of a dual quadruplet}
Let $P_1, P_2, P_3$, and $P_4$ be four points in $\mbox{\boldmath $S$}^3$ which are not cocircular, and $\varSigma$ a sphere through them. 
Let $\varGamma_{ijk}$ $(i\ne j\ne k\ne i)$ be an oriented circle through $P_i,P_j$, and $P_k$ whose orientation is given by the cyclic order of $P_i,P_j$, and $P_k$ along the circle. 
\begin{definition}\label{def_circular_angle_bisector} \rm 
Suppose $\{i,j,k,l\}=\{1,2,3,4\}$. 
Let $\mathcal{S}_{ij}$ be the set of the oriented circles through $P_i$ and $P_j$ which intersect $\varGamma_{ijk}$ and $\varGamma_{ijl}$ in the same angle. 
Let $\varGamma_{ij}$ be the circle in $\mathcal{S}_{ij}$ which minimizes the intersection angles with $\varGamma_{ijk}$ and $\varGamma_{ijl}$. 
We call $\varGamma_{ij}$ the {\em circular angle bisector} of $\varGamma_{ijk}$ and $\varGamma_{ijl}$. 
\end{definition}
Let $\pi_i:\varSigma\to\varPi_i\cup\{\infty\}$ $(i=1,2,3,4)$ be a stereographic projection from $P_i$. 
It maps three circles thruogh $P_i$, $\varGamma_{ijk}$, $\varGamma_{ijl}$, and $\varGamma_{ikl}$, ($\{i,j,k,l\}=\{1,2,3,4\}$) to three lines which forms the triangle $\triangle \pi_i(P_{j})\pi_i(P_{k})\pi_i(P_{l})$. 
Therefore, $\pi_i$ maps the circular angle bisector $\varGamma_{ij}$ to the angle-bisector of the angle $\angle \pi_i(P_{j})$. 
It follows that  $\varGamma_{ij}$ belongs to the sphere $\varSigma$. 

Three angle-bisectors $\pi_i(\varGamma_{ij}), \pi_i(\varGamma_{ik})$, and $\pi_i(\varGamma_{il})$ meet at the incenter of the triangle $\triangle \pi_i(P_{j})\pi_i(P_{k})\pi_i(P_{l})$. 
Let us denote it by $\widetilde P_i^{\prime}$ (Figure \ref{conf_dual1-incenter}). 
\begin{figure}[htbp]
\begin{center}
\includegraphics[width=.38\linewidth]{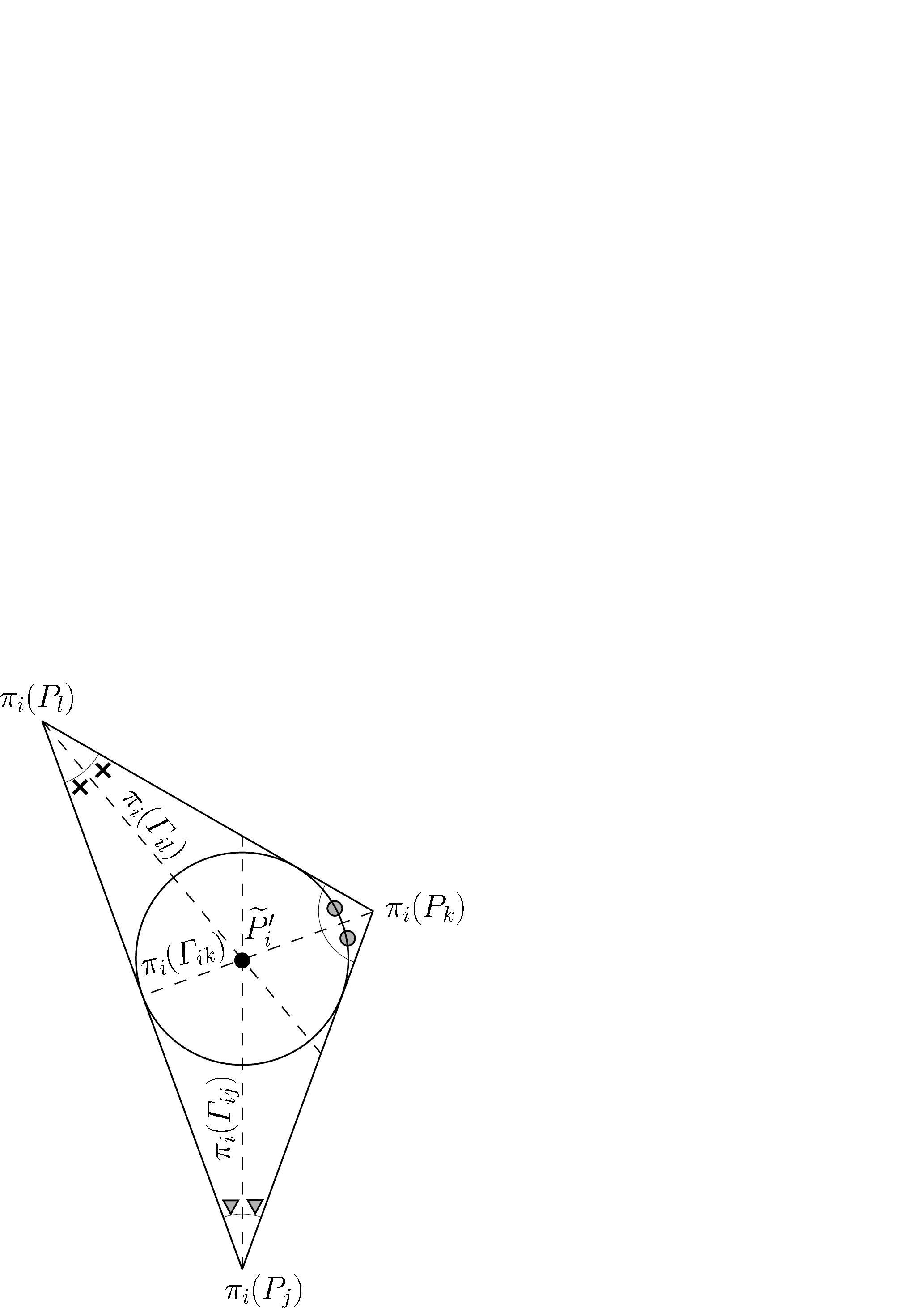}
\caption{The inceter of the triangle $\triangle\pi_i(P_{j})\pi_i(P_{k})\pi_i(P_{l})$}
\label{conf_dual1-incenter}
\end{center}
\end{figure}

\begin{definition}\label{def-dual} \rm 
As above, put $P_i^{\prime}=\pi_i{}^{-1}\left(\widetilde P_i^{\prime}\right)$. 
Namely, $P_i^{\prime}$ is given by 
\[\varGamma_{ij}\cap\varGamma_{ik}\cap\varGamma_{il}=\{P_i, P_i^{\prime}\} \hspace{0.5cm}(\{i,j,k,l\}=\{1,2,3,4\}).\]
We call $\left(P_1^{\prime}, P_2^{\prime}, P_3^{\prime}, P_4^{\prime}\right)$ the {\em dual} {\em (quadruplet)} of $\left(P_1, P_2, P_3, P_4\right)$. 

Let us denote a quadruplet and its dual by $\mathcal{Q}$ and $\mathcal{Q}^{\prime}$ in what follows. 
\end{definition}

It follows directly from the definition that the points of the dual quadruplet are cospherical with those of the original one. 

Since the dual quadruplet can be defined by circles and angles, 
\begin{lemma}\label{lem-conf_inv}
The notion of the dual quadruplet is conformally invariant. 
Namely, $\left(T(\mathcal{Q})\right)^{\prime}=T\left(\mathcal{Q}^{\prime}\right)$ for a M\"obius transformation $T$. 
\end{lemma}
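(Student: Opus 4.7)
The plan is to verify that every ingredient used in Definition 1.2 to construct $\mathcal{Q}'$ from $\mathcal{Q}$—the circumscribing sphere, oriented circles through prescribed triples of points, intersection angles, and set-theoretic intersections—is individually preserved by an arbitrary Möbius transformation $T$ of $\mbox{\boldmath $S$}^3$. Since Möbius transformations send spheres to spheres, circles to circles, preserve the cyclic ordering of points on a circle, and preserve angles between intersecting circles, the lemma should follow by a direct checklist with no nontrivial calculation required.

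Concretely, I would proceed in the order of the definition. The sphere $\varSigma$ through $P_1,\dots,P_4$ maps under $T$ to the unique sphere through $T(P_1),\dots,T(P_4)$. For each triple $\{i,j,k\}$, $T$ maps the oriented circle $\varGamma_{ijk}$ to the oriented circle through $T(P_i),T(P_j),T(P_k)$ with the induced cyclic orientation; this is exactly the circle $\varGamma_{ijk}$ associated to $T(\mathcal{Q})$. Next, since the defining condition for a circle through $P_i,P_j$ to lie in $\mathcal{S}_{ij}$ is an equality of two intersection angles, and since $T$ preserves intersection angles, we get $T(\mathcal{S}_{ij})=\mathcal{S}_{T(i)T(j)}$ (where the latter is computed for $T(\mathcal{Q})$). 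The same angle-preservation gives that the minimizing element of $\mathcal{S}_{ij}$ is carried to the minimizing element of the image family, so $T(\varGamma_{ij})$ is the circular angle bisector of $T(\varGamma_{ijk})$ and $T(\varGamma_{ijl})$.

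For the concluding step, since $T$ is a bijection it commutes with intersections, so
\[
T(\varGamma_{ij}\cap\varGamma_{ik}\cap\varGamma_{il})=T(\varGamma_{ij})\cap T(\varGamma_{ik})\cap T(\varGamma_{il})=\{T(P_i),T(P_i)'\},
\]
and since $T(P_i)$ is obviously one of the two intersection points, the other must be $T(P_i')=T(P_i)'$, proving $T(\mathcal{Q}')=(T(\mathcal{Q}))'$.

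The only point meriting care is the behavior of angles of oriented circles under orientation-reversing Möbius transformations, which negate signed intersection angles. However, the defining condition for $\mathcal{S}_{ij}$ is an equality of two angles, so simultaneous negation preserves membership; and the selection of $\varGamma_{ij}$ is made by minimizing the magnitude of the (common) intersection angle, which is also unaffected by a global sign change. Thus the argument works uniformly for all Möbius transformations, whether orientation-preserving or not, and this is the only subtlety I anticipate in an otherwise routine verification.
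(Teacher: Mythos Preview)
Your argument is correct and is exactly the approach the paper takes: the paper gives no formal proof at all, only the one-line remark ``Since the dual quadruplet can be defined by circles and angles'' immediately preceding the lemma, and your proposal is simply a careful unpacking of that sentence. Your added discussion of the orientation-reversing case is a reasonable precaution but goes beyond what the paper bothers to address.
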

It allows us to assume that $P_1, P_2, P_3$, and $P_4$ are four points in $\mathbb{R}^3$ (or $\mathbb{R}^3\cup\{\infty\}$) which are not cocircular nor collinear. 

\begin{example} \rm 
(1) When the four points are vertices of a regular tetrahedron in $\mathbb{R}^3$, the dual can be obtained by the symmetry in the barycenter. 

(2) The dual of 
\[(1,0,0), \,(0,1,0), \,(0,-1,0), \>\mbox{ and }\>(0,0,1)\] 
are 
\[\left(-\frac1{\sqrt2}, 0, \frac1{\sqrt2}\right), \,\left(\frac12, -\frac1{\sqrt2}, \frac12\right), \,\left(\frac12, \frac1{\sqrt2}, \frac12\right),\>\mbox{ and }\>\left(\frac1{\sqrt2}, 0, -\frac1{\sqrt2}\right).\] 
\end{example}

Our main theorem is: 
\begin{theorem}\label{thm} 
The dual of a dual quadruplet coincides with the original one: $\mathcal{Q}^{\prime\prime}=\mathcal{Q}$. 
\end{theorem}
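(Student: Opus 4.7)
The plan is to exhibit an anti-M\"obius involution $\mu$ with $\mu(\mathcal{Q}) = \mathcal{Q}'$ componentwise. Since the dual construction depends only on circles and unoriented angles, the conformal invariance of Lemma~\ref{lem-conf_inv} extends to anti-M\"obius maps, so once such $\mu$ is found we have
\[
\mathcal{Q}'' = (\mathcal{Q}')' = (\mu(\mathcal{Q}))' = \mu(\mathcal{Q}') = \mu^2(\mathcal{Q}) = \mathcal{Q}.
\]

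By Lemma~\ref{lem-conf_inv} I may place $P_1 = \infty$, and by a similarity I further normalize so that $P_1' = 0$ is the incenter of $\triangle P_2 P_3 P_4$ and the incircle is the unit circle. First I derive a closed form for $P_j'$ ($j = 2, 3, 4$). Applying $f(z) = 1/(z - P_j)$ sends $P_j \to \infty$ and $P_1 = \infty \to 0$, and the incenter formula for $\triangle f(P_1) f(P_k) f(P_l)$ combines with the identity $\gamma P_j + \beta P_k + \alpha P_l = 0$ (the vanishing of $P_1'$, where $\alpha, \beta, \gamma$ are the side lengths opposite $P_l, P_k, P_j$) to yield, after a telescoping cancellation,
\[
P_j' = P_k + P_l - \frac{P_k P_l}{P_j}.
\]

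Next I parametrize by the tangent points $T_2, T_3, T_4 \in S^1$ (with $T_j$ on the side opposite $P_j$), using the standard identity $P_j = 2 T_k T_l/(T_k + T_l)$, whence $\overline{P_j} = 2/(T_k + T_l)$. Substituting produces, after another cancellation, the symmetric formula
\[
P_j' = \frac{4\,T_2 T_3 T_4}{(T_j + T_k)(T_j + T_l)},
\]
and multiplying by $\overline{P_j}$ gives
\[
\overline{P_j}\,P_j' = \frac{8\,T_2 T_3 T_4}{(T_2 + T_3)(T_3 + T_4)(T_4 + T_2)},
\]
which is manifestly independent of $j$. Denoting the common value by $-c$ gives $P_j' = -c/\overline{P_j}$ for $j = 2, 3, 4$; and $\mu(z) := -c/\overline{z}$ also sends $P_1 = \infty$ to $0 = P_1'$, so $\mu(\mathcal{Q}) = \mathcal{Q}'$, which completes the plan. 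The main obstacle I anticipate is the double telescoping producing this highly symmetric form; the cancellations are forced by the incenter identity but only become visible after the tangent-point parametrization is imposed.
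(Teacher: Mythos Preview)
Your argument is correct and genuinely different from the paper's. The paper proceeds synthetically: after projecting from $P_1$ it first proves (Lemma~\ref{lem2}) that $\widetilde P_2',\widetilde P_3',\widetilde P_4'$ are the excenters of $\triangle\widetilde P_2\widetilde P_3\widetilde P_4$, then shows (Lemma~\ref{lem4-equiradius}) that the three circles $\varGamma(\widetilde P_1',\widetilde P_i',\widetilde P_j')$ all have the same radius, whence each circular angle bisector of a pair of them is a straight line and they therefore meet at $\infty$, giving $\pi_1(P_1'')=\infty$. Your route instead produces an explicit anti-M\"obius symmetry $\mu(z)=-c/\bar z$ swapping $\mathcal Q$ and $\mathcal Q'$; this is more computational but has the pleasant side effect that Theorem~\ref{thm2} (the cross ratio of $\mathcal Q'$ is the conjugate of that of $\mathcal Q$) is immediate, since any anti-M\"obius map conjugates cross ratios.

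One point you should make explicit: you assert that $\mu$ is an involution, but $\mu^2(z)=(c/\bar c)\,z$, so you need $c\in\mathbb R$. This follows from your formula for $-c$ using $\overline{T_m}=1/T_m$, or more geometrically from the fact that the incenter $0$, the vertex $P_j$, and the $j$-excenter $P_j'$ are collinear (all lie on the internal bisector at $P_j$), so $P_j'/P_j\in\mathbb R$ and hence $\overline{P_j}\,P_j'=|P_j|^2\cdot(P_j'/P_j)\in\mathbb R$. Also, your justification that Lemma~\ref{lem-conf_inv} extends to anti-M\"obius maps (``depends only on circles and unoriented angles'') is slightly glib, since the definition uses oriented circles; the cleanest way to say it is that the incenter of a reflected triangle is the reflection of the incenter, so duality commutes with $z\mapsto\bar z$ and hence with every anti-M\"obius map.
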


\subsection{Proof of Theorem \ref{thm}}
Let $\left(P_1^{\prime\prime}, P_2^{\prime\prime}, P_3^{\prime\prime}, P_4^{\prime\prime}\right)$ be the dual of $\left(P_1^{\prime}, P_2^{\prime}, P_3^{\prime}, P_4^{\prime}\right)$. 
We work in the extended plane $\pi_1(\varSigma)\cong\varPi_1\cup\{\infty\}$ in what follows. 
We have only to show $\pi_1\left(P_1^{\prime\prime}\right)=\infty$. 

\smallskip
Put 
\[\widetilde P_j=\pi_1(P_j) \hspace{0.4cm}\mbox{and}\hspace{0.4cm}\widetilde P_j^{\prime}=\pi_1\big(P_j^{\prime}\big)\]
for $j=1,2,3,4$. We remark $\widetilde P_1=\infty$. 

Let $\widehat P_j$ $(j=2,3,4)$ be the excenters of the triangle $\triangle \widetilde P_2\widetilde P_3\widetilde P_4$ (Figure \ref{conf_dual2-orthocenter}). 
\begin{figure}[htbp]
\begin{center}
\includegraphics[width=.5\linewidth]{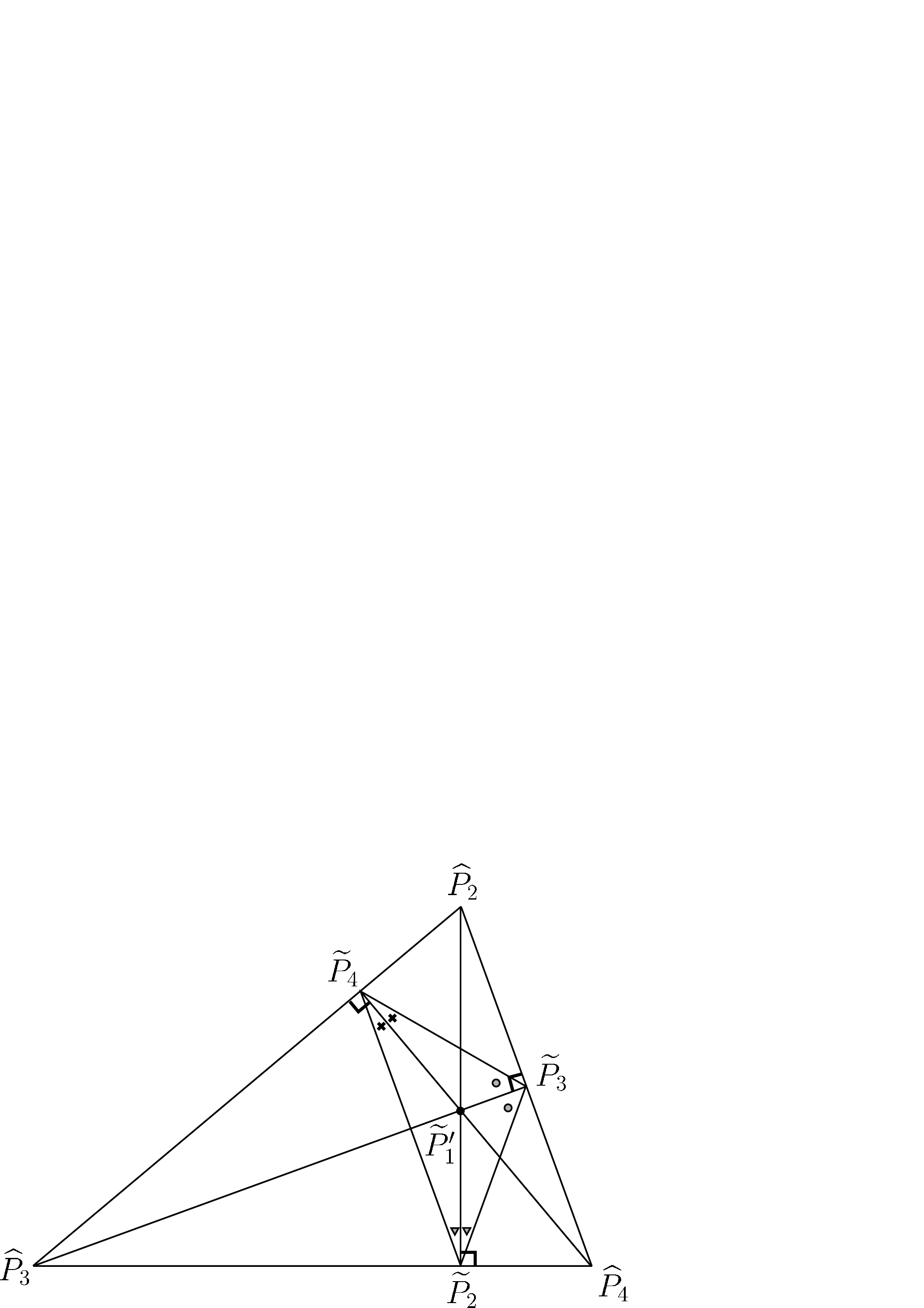}
\caption{The incenter and excenters of $\triangle \widetilde P_2\widetilde P_3\widetilde P_4$}
\label{conf_dual2-orthocenter}
\end{center}
\end{figure}
\begin{lemma}\label{lem2} 
We have $\widetilde P_j^{\prime}=\widehat P_j$ for $j=2,3,4$. 
Namely, through a stereographic projection from a point of a quadruplet $\mathcal{Q}$, the dual can be obtained as the union of the incenter and the excenters of the triangle whose vertices are the images of the other three points of $\mathcal{Q}$. 
\end{lemma}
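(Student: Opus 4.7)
The plan is to treat $j=2$ (the other cases being symmetric) by transporting the incenter characterisation of $P_2^{\prime}$ from $\varPi_2$ to $\varPi_1$ via an inversion, and then identifying the resulting point with $\widehat P_2$ by a short distance calculation.

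By Definition~\ref{def-dual}, $\widetilde P_2^{\prime}=(\pi_1\circ\pi_2^{-1})(I)$, where $I$ is the incenter of $\triangle\pi_2(P_1)\pi_2(P_3)\pi_2(P_4)$ in $\varPi_2$. Let $\iota$ denote the inversion of $\pi_1(\varSigma)$ centred at $\widetilde P_2$ of radius $1$, and set $Q_j=\iota(\widetilde P_j)$ for $j=3,4$. The composite $\iota\circ\pi_1\circ\pi_2^{-1}$ is anti-conformal (a M\"obius transformation followed by an inversion) and fixes $\infty$ (since $\pi_2(P_2)=\infty\mapsto\widetilde P_2\mapsto\infty$), so it is an anti-conformal similarity of the plane. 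Such a map sends triangles to similar triangles, angle bisectors to angle bisectors, and hence incenters to incenters. Tracking vertices, the triangle $\triangle\pi_2(P_1)\pi_2(P_3)\pi_2(P_4)$ is sent to $\triangle\iota(\widetilde P_1)\iota(\widetilde P_3)\iota(\widetilde P_4)=\triangle\widetilde P_2 Q_3 Q_4$, so $\iota(\widetilde P_2^{\prime})$ equals the incenter $I^{\prime}$ of $\triangle\widetilde P_2 Q_3 Q_4$; equivalently, $\widetilde P_2^{\prime}=\iota(I^{\prime})$.

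It remains to verify $\iota(I^{\prime})=\widehat P_2$. Write $a,b,c$ for the sides of $\triangle\widetilde P_2\widetilde P_3\widetilde P_4$ opposite $\widetilde P_2,\widetilde P_3,\widetilde P_4$, $A$ for the angle at $\widetilde P_2$, and $s=(a+b+c)/2$. A short computation (using the law of cosines to obtain $|Q_3Q_4|=a/(bc)$) shows that $\triangle\widetilde P_2 Q_3 Q_4$ has the same angle $A$ at $\widetilde P_2$, side lengths $a/(bc),\,1/b,\,1/c$, and semi-perimeter $s/(bc)$. The standard vertex-to-incenter formula applied in $\triangle\widetilde P_2 Q_3 Q_4$ then gives $|\widetilde P_2 I^{\prime}|=(s-a)/(bc\cos(A/2))$, whence $|\widetilde P_2\iota(I^{\prime})|=bc\cos(A/2)/(s-a)$. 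The identity $bc\cos^{2}(A/2)=s(s-a)$, which is a reformulation of the law of cosines, converts this into $s/\cos(A/2)$---precisely the distance from $\widetilde P_2$ to the excenter of $\triangle\widetilde P_2\widetilde P_3\widetilde P_4$ opposite $\widetilde P_2$. Since both $\iota(I^{\prime})$ and $\widehat P_2$ lie on the interior bisector ray from $\widetilde P_2$ (the bisector at $\widetilde P_2$ is the same in both triangles, $\iota$ preserves rays through its centre, and the excenter opposite a vertex is classically on that vertex's interior bisector extended), equality of the distances yields $\widetilde P_2^{\prime}=\widehat P_2$.

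The main obstacle is the first observation---that $\iota\circ\pi_1\circ\pi_2^{-1}$ is an anti-conformal similarity---which allows the potentially intricate transfer between the two stereographic pictures to be replaced by a concrete inversion of an auxiliary triangle. After that, everything reduces to a half-angle identity equivalent to the law of cosines.
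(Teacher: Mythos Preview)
Your proof is correct and shares the paper's core idea: invert at $\widetilde P_2$ so that the defining ``incenter in $\varPi_2$'' characterisation of $P_2^{\prime}$ becomes the incenter of an auxiliary triangle in $\varPi_1$, and then identify this inverted incenter with the excenter $\widehat P_2$. The difference lies entirely in the second step. The paper chooses the inversion radius to be $\sqrt{|\widetilde P_2\widetilde P_3|\,|\widetilde P_2\widetilde P_4|}$, so that the auxiliary triangle is the mirror image of $\triangle\widetilde P_2\widetilde P_3\widetilde P_4$ across the bisector at $\widetilde P_2$; its incenter is therefore $\widetilde P_1^{\prime}$ itself, and a power-of-a-point relation (coming from the cocircularity of $\widehat P_2,\widetilde P_3,\widetilde P_4,\widetilde P_1^{\prime}$) shows that $\widehat P_2$ also inverts to $\widetilde P_1^{\prime}$. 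You instead take radius $1$, obtain an auxiliary triangle similar to the original with explicit side lengths $1/b,1/c,a/(bc)$, and finish with the half-angle identity $bc\cos^2(A/2)=s(s-a)$ to match the vertex-to-incenter and vertex-to-excenter distances. The paper's route is more synthetic and exhibits the pleasant symmetry $I_2(\widetilde P_2^{\prime})=\widetilde P_1^{\prime}$; yours is more computational but self-contained, avoiding the preliminary cocircularity observation.
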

\begin{proof}
We show $\widetilde P_2^{\prime}=\widehat P_2$. 
%
As as $\widehat P_2, \widetilde P_3, \widetilde P_4$, and $\widetilde P_1^{\prime}$ are cocircular and $\angle \widetilde P_1^{\prime}\widetilde P_2\widetilde P_3=\angle \widetilde P_1^{\prime}\widetilde P_2\widetilde P_4$ (Figure \ref{conf_dual2'}), the power of a point theorem implies 
\begin{equation}\label{f_houbeki}
\big|\widetilde P_2\widetilde P_3\big|\big|\widetilde P_2\widetilde P_4\big|
=\big|\widetilde P_2\widetilde P_1^{\prime}\big|\big|\widetilde P_2\widehat P_2\big|.
\end{equation}
\begin{figure}[htbp]
\begin{center}
\includegraphics[width=.5\linewidth]{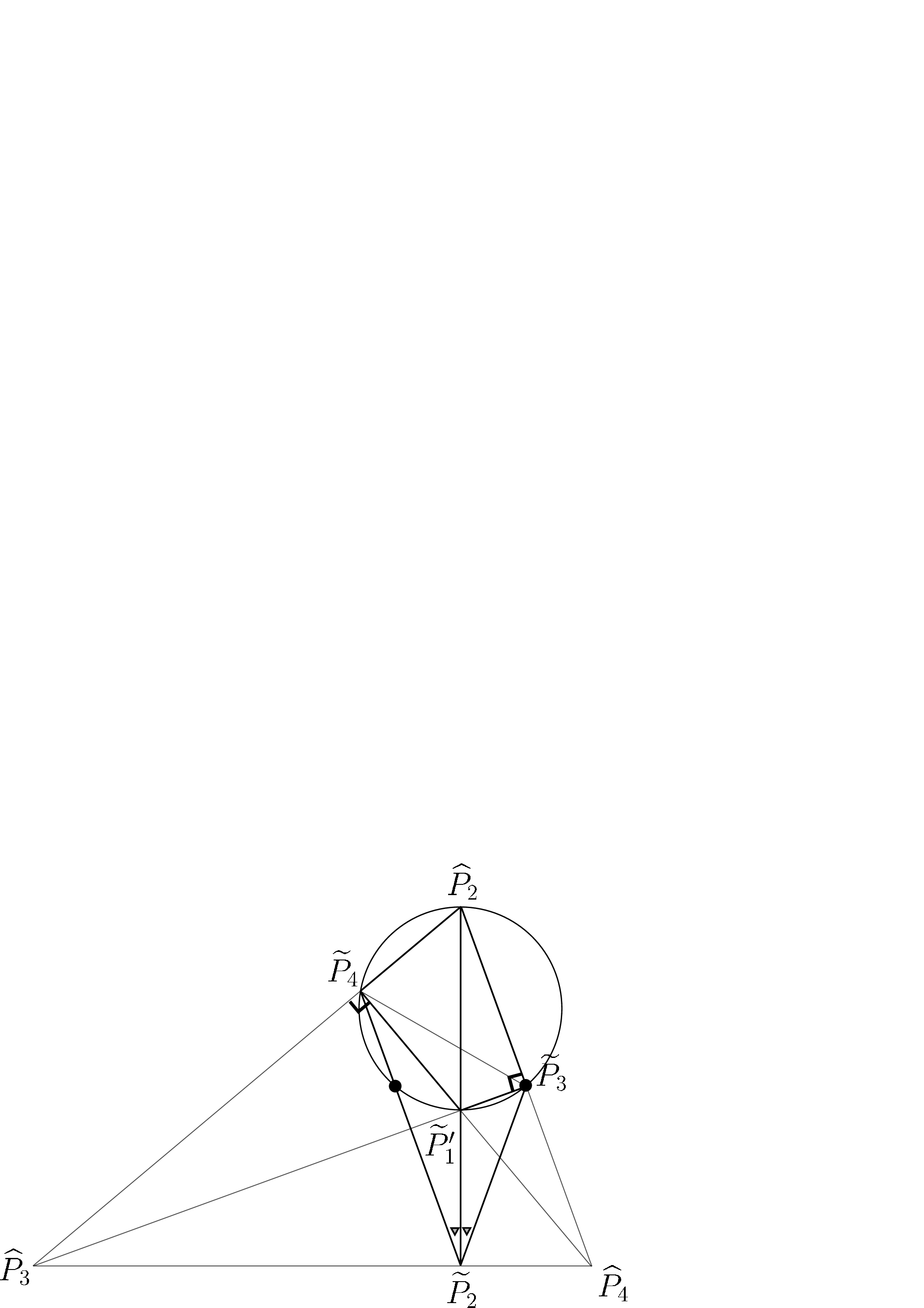}
\caption{$\big|\widetilde P_2\widetilde P_3\big|\big|\widetilde P_2\widetilde P_4\big|
=\big|\widetilde P_2\widetilde P_1^{\prime}\big|\big|\widetilde P_2\widehat P_j\big|$}
\label{conf_dual2'}
\end{center}
\end{figure}

Consider an inversion $I_2$ in a circle with center $\widetilde P_2$ and radius $r=\sqrt{\big|\widetilde P_2\widetilde P_3\big|\big|\widetilde P_2\widetilde P_4\big|}\,$. 
Then \setlength\arraycolsep{1pt}
\[\begin{array}{rclrcl}
I_2(\widetilde P_1)&=&\widetilde P_2, & &&\\[1mm]
\overline{\widetilde P_2I_2\big(\widetilde P_3\big)}&=&\overline{\widetilde P_2\widetilde P_3}, & \hspace{0.5cm}
\big|\widetilde P_2I_2\big(\widetilde P_3\big)\big|&=&\big|\widetilde P_2\widetilde P_4\big|, \\[2mm]
\overline{\widetilde P_2I_2\big(\widetilde P_4\big)}&=&\overline{\widetilde P_2\widetilde P_4}, & \hspace{0.5cm}
\big|\widetilde P_2I_2\big(\widetilde P_4\big)\big|&=&\big|\widetilde P_2\widetilde P_3\big|,
\end{array}\]
which implies that the triangle $\triangle I_2(\widetilde P_1)I_2\big(\widetilde P_3\big)I_2\big(\widetilde P_4\big)$ is symmetric with the triangle $\triangle \widetilde P_2\widetilde P_3\widetilde P_4$ in the line $\overline{\widetilde P_2\widetilde P_1^{\prime}}$ (Figure \ref{conf_dual3-I2} right). 
\begin{figure}[htbp]
\begin{center}
\includegraphics[width=.8\linewidth]{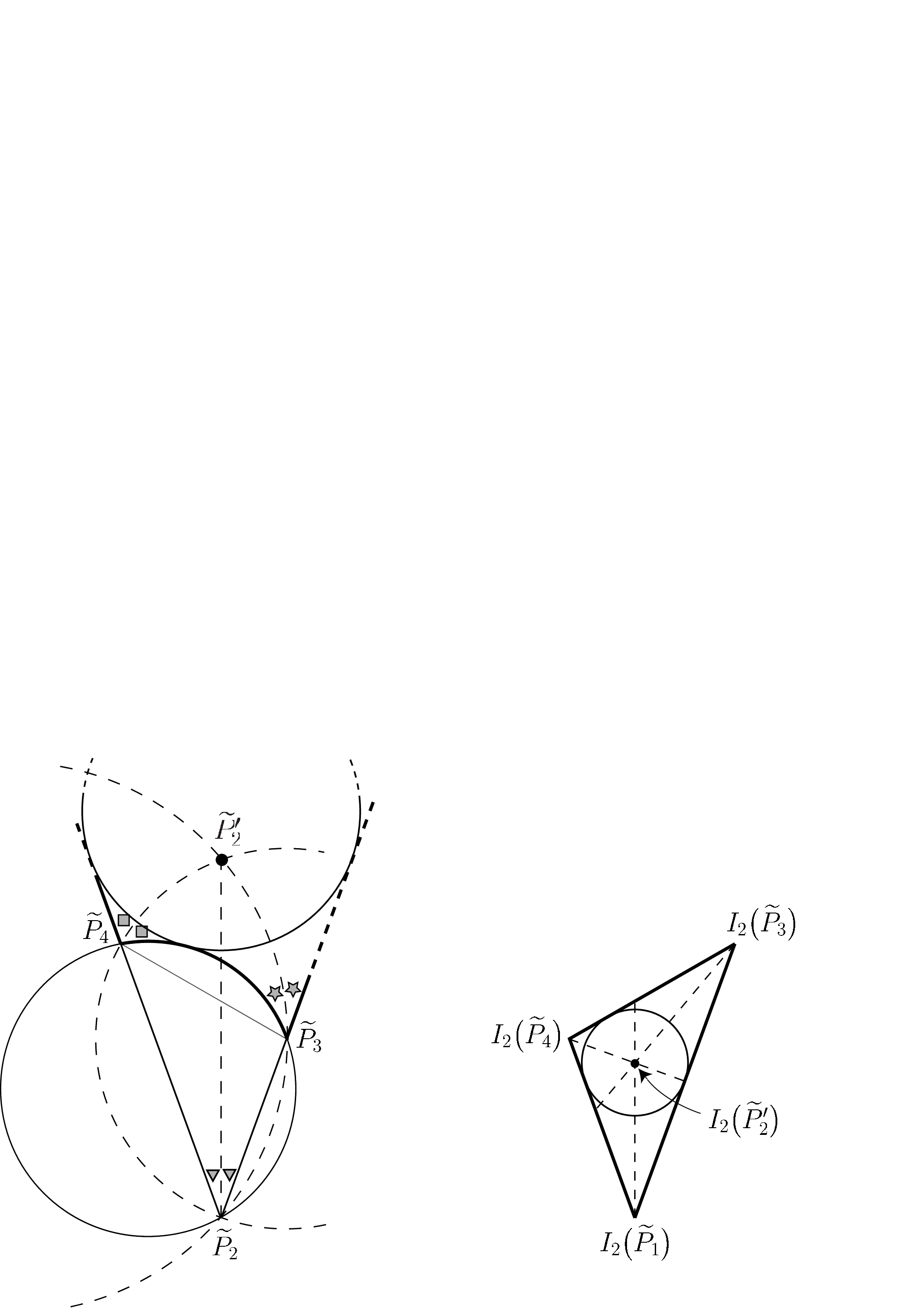}
\caption{$\widetilde P_2^{\prime}$ (left) and the image by the inversion $I_2$ (right)}
\label{conf_dual3-I2}
\end{center}
\end{figure}
Since $I_2\big(\widetilde P_2^{\prime}\big)$ is the incenter of the triangle $\triangle I_2(\widetilde P_1)I_2\big(\widetilde P_3\big)I_2\big(\widetilde P_4\big)$, we have $I_2\big(\widetilde P_2^{\prime}\big)=\widetilde P_1^{\prime}$. 

On the other hand, the formula (\ref{f_houbeki}) implies that $I_2\big(\widehat P_2\big)=\widetilde P_1^{\prime}$. 
Therefore $\widetilde P_2^{\prime}=\widehat P_2$. 
\end{proof}
\begin{corollary}
The dual quadruplet consists of points which are not cocircular (nor collinear when points are in $\mathbb{R}^3$). 
Therefore we can take its dual once again. 
\end{corollary}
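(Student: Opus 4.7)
The plan is to work in the plane $\varPi_1$ via the stereographic projection $\pi_1$, use Lemma \ref{lem2} to identify the dual quadruplet with an explicit planar configuration, and reduce the assertion to a classical fact about incenters and excenters.

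By Lemma \ref{lem2}, the images $\widetilde{P}_j^{\prime}=\pi_1(P_j^{\prime})$ are precisely the incenter $\widetilde{P}_1^{\prime}$ and the three excenters $\widehat{P}_2,\widehat{P}_3,\widehat{P}_4$ of the non-degenerate triangle $\triangle\widetilde{P}_2\widetilde{P}_3\widetilde{P}_4$. Since $\pi_1$ is a conformal diffeomorphism from $\varSigma\setminus\{P_1\}$ onto $\varPi_1$ that sends circles in $\varSigma$ to circles or lines in $\varPi_1$, cocircularity of the dual quadruplet in $\mbox{\boldmath $S$}^3$ would force these four planar points to be either concyclic or collinear. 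It therefore suffices to show that the incenter and three excenters of any non-degenerate triangle are neither concyclic nor collinear.

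To establish this, I would appeal to two classical facts about the excentral triangle $\triangle\widehat{P}_2\widehat{P}_3\widehat{P}_4$: it is acute (its angles equal $\tfrac{\pi}{2}-\tfrac{\alpha_j}{2}$, where $\alpha_j$ is the angle of $\triangle\widetilde{P}_2\widetilde{P}_3\widetilde{P}_4$ at $\widetilde{P}_j$), and the incenter $\widetilde{P}_1^{\prime}$ is its orthocenter. Both assertions follow quickly from the perpendicularity of internal and external angle bisectors: the side $\widehat{P}_k\widehat{P}_l$ of the excentral triangle lies along the external bisector at $\widetilde{P}_j$, which is perpendicular to the internal bisector at $\widetilde{P}_j$, namely the line through $\widetilde{P}_j$, $\widetilde{P}_1^{\prime}$ and $\widehat{P}_j$. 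Since the orthocenter of an acute triangle lies strictly in its interior, $\widetilde{P}_1^{\prime}$ is interior to $\triangle\widehat{P}_2\widehat{P}_3\widehat{P}_4$, so the four points fail to be in convex position; in particular they cannot be concyclic, and certainly not collinear.

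The argument is elementary throughout; the only step requiring some care is the classical identification just used, but the perpendicularity observation makes it transparent, so I do not anticipate any serious obstacle.
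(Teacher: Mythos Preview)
Your argument is correct. The paper gives no explicit proof of this corollary; it is stated immediately after Lemma~\ref{lem2} (and Figure~\ref{conf_dual2-orthocenter}) as a direct consequence, the implicit point being that under $\pi_1$ the dual quadruplet is the incenter together with the three excenters of a genuine triangle, a configuration that is visibly non-cocircular. Your proof makes this implicit step precise via the standard identification of the incenter as the orthocenter of the (always acute) excentral triangle, which is exactly the kind of justification the paper is tacitly invoking.
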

\begin{lemma}\label{lem4-equiradius}
Three circles through $\widetilde P_1^{\prime}$, $\varGamma\big(\widetilde P_1^{\prime}, \widetilde P_i^{\prime}, \widetilde P_j^{\prime}\big)$ $(\{i,j\}\subset\{2,3,4\})$, have the same radius {\rm (Figure \ref{conf_dual5-equiradius})}. 
%
\end{lemma}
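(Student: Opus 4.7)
The plan is to identify $\widetilde P_1'$ as the orthocenter of the triangle $\triangle\widehat P_2\widehat P_3\widehat P_4$ and then invoke the classical fact that, in any triangle, the circumcircles of the three subtriangles cut off by the orthocenter all have the same radius as the circumcircle of the triangle itself.

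By Lemma~\ref{lem2}, the dual points satisfy $\widetilde P_j' = \widehat P_j$ for $j=2,3,4$, and $\widetilde P_1'$ is the incenter of $\triangle \widetilde P_2\widetilde P_3\widetilde P_4$. I would first check the orthocenter claim: the side $\overline{\widehat P_j\widehat P_k}$ of the excentral triangle lies on the external angle bisector at the vertex $\widetilde P_i$ (both $\widehat P_j$ and $\widehat P_k$ lie on that external bisector, since each excenter lies on the external bisectors at the two non-opposite vertices). This external bisector is perpendicular to the internal bisector at $\widetilde P_i$, and the internal bisector at $\widetilde P_i$ is precisely the line $\overline{\widehat P_i\widetilde P_1'}$ (it passes through both the incenter and the opposite excenter). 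Hence the line from $\widehat P_i$ through $\widetilde P_1'$ is the altitude of $\triangle \widehat P_2\widehat P_3\widehat P_4$ from $\widehat P_i$; doing this for all three vertices shows that $\widetilde P_1'$ is the orthocenter of $\triangle \widehat P_2\widehat P_3\widehat P_4$.

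Next, I would apply the standard reflection argument: if $H$ is the orthocenter of a triangle $\triangle ABC$ and $H^{\ast}$ is its reflection across the side $\overline{BC}$, then $H^{\ast}$ lies on the circumcircle of $\triangle ABC$. Consequently, the circle through $H,B,C$ is the reflection of the circle through $H^{\ast},B,C$ across $\overline{BC}$, and since reflection is an isometry, these two circles have the same radius; but the second is the circumcircle of $\triangle ABC$. Applied to $\triangle\widehat P_2\widehat P_3\widehat P_4$ with orthocenter $\widetilde P_1'$, this yields that each of the three circles $\Gamma(\widetilde P_1',\widetilde P_i',\widetilde P_j') = \Gamma(\widetilde P_1',\widehat P_i,\widehat P_j)$ has radius equal to the circumradius of $\triangle\widehat P_2\widehat P_3\widehat P_4$, which is the desired equality.

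I do not expect any serious obstacle; the only content beyond bookkeeping is recognizing the incenter/excenter orthocentric configuration, and after that the equal-radius statement is a classical one-line fact (equivalently: the four points $\widetilde P_1',\widehat P_2,\widehat P_3,\widehat P_4$ form an orthocentric system, and in any orthocentric system the four subtriangles share a common circumradius).
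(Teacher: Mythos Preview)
Your proof is correct and takes a genuinely different route from the paper's.

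The paper proceeds by first noting that $\widetilde P_1^{\prime}, \widetilde P_j^{\prime}, \widetilde P_k, \widetilde P_l$ (with $\{j,k,l\}=\{2,3,4\}$) are concyclic, which yields a family of angle equalities such as $\angle \widetilde P_1^{\prime}\widetilde P_2^{\prime}\widetilde P_3^{\prime}=\angle \widetilde P_1^{\prime}\widetilde P_4^{\prime}\widetilde P_3^{\prime}$. That equality says the inscribed angles over the common chord $\widetilde P_1^{\prime}\widetilde P_3^{\prime}$ in the circles $\varGamma(\widetilde P_1^{\prime},\widetilde P_2^{\prime},\widetilde P_3^{\prime})$ and $\varGamma(\widetilde P_1^{\prime},\widetilde P_4^{\prime},\widetilde P_3^{\prime})$ agree, hence (by the extended law of sines) the radii agree; the third circle is handled the same way.

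Your argument instead recognizes, via Lemma~\ref{lem2}, that $(\widetilde P_1^{\prime},\widetilde P_2^{\prime},\widetilde P_3^{\prime},\widetilde P_4^{\prime})$ is the incenter--excenter quadruple of $\triangle\widetilde P_2\widetilde P_3\widetilde P_4$, shows directly that this is an orthocentric system, and then invokes the classical equal-circumradius property of orthocentric systems via the reflection-of-the-orthocenter argument. This is slightly more structural and even yields a little extra (the common radius equals the circumradius of the excentral triangle). On the other hand, the paper's computation has the advantage of producing the explicit angle identities~(\ref{f_angles}) along the way, and these are reused later in the proof of Theorem~\ref{thm2} on the cross ratio; your approach bypasses those identities, so they would have to be established separately if you followed this route through the rest of the paper.
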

\begin{proof}
First observe that, as $\widetilde P_1^{\prime}, \widetilde P_j^{\prime}, \widetilde P_k$, and $\widetilde P_l$ $(\{j,k,l\}=\{2,3,4\})$ are cocircular (Figure \ref{conf_dual4'-cocircular}), we have 
\begin{figure}[htbp]
\begin{center}
\includegraphics[width=.6\linewidth]{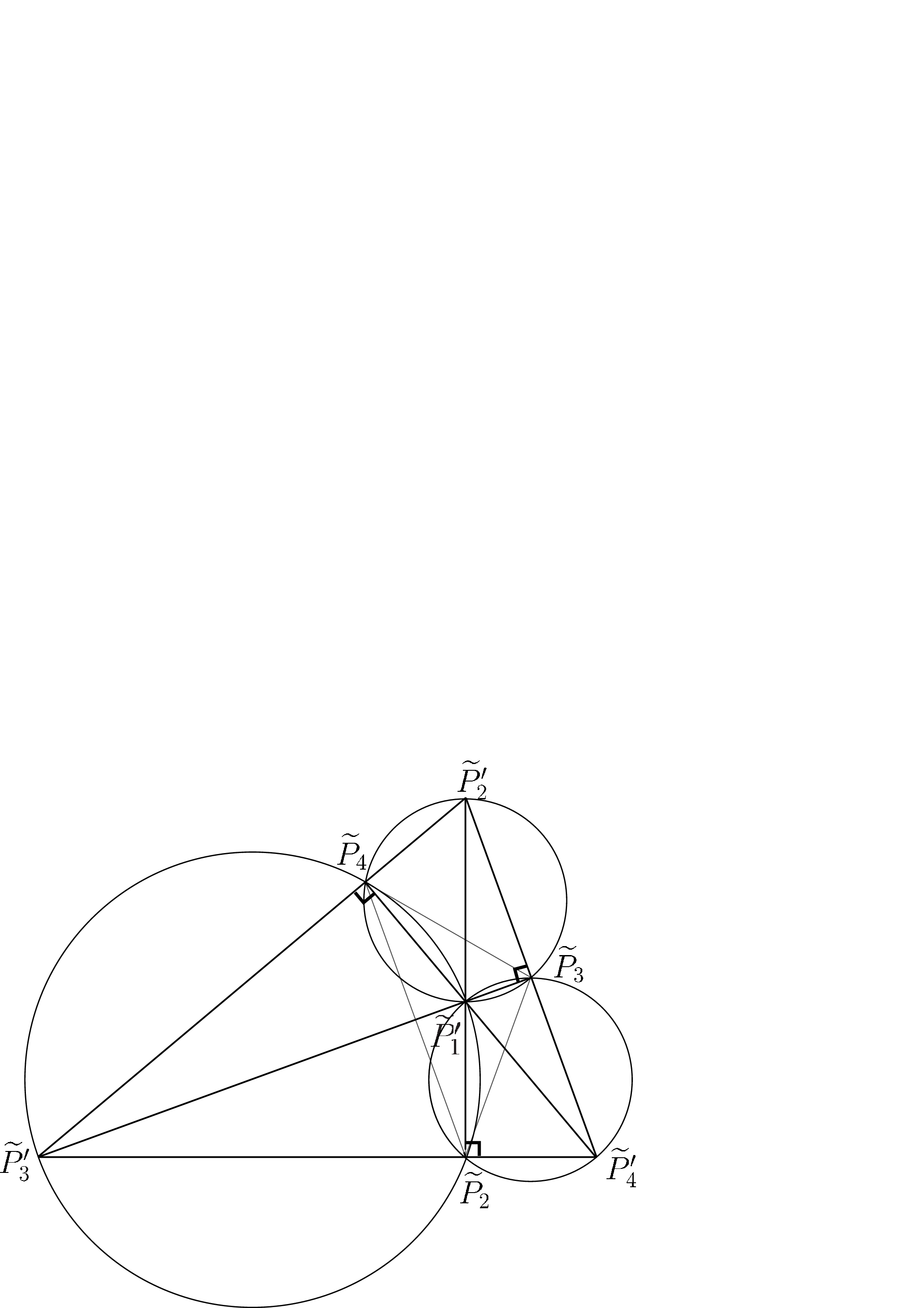}
\caption{$\widetilde P_1^{\prime}, \widetilde P_j^{\prime}, \widetilde P_k$, and $\widetilde P_l$ are cocircular}
\label{conf_dual4'-cocircular}
\end{center}
\end{figure}

\begin{equation}\label{f_angles}
\begin{array}{l}
\angle \widetilde P_1^{\prime}\widetilde P_2\widetilde P_3=\angle \widetilde P_1^{\prime}\widetilde P_2\widetilde P_4=\angle \widetilde P_1^{\prime}\widetilde P_3^{\prime}\widetilde P_2^{\prime}=\angle \widetilde P_1^{\prime}\widetilde P_4^{\prime}\widetilde P_2^{\prime},\\[1mm]
\angle \widetilde P_1^{\prime}\widetilde P_3\widetilde P_2=\angle \widetilde P_1^{\prime}\widetilde P_3\widetilde P_4=\angle \widetilde P_1^{\prime}\widetilde P_2^{\prime}\widetilde P_3^{\prime}=\angle \widetilde P_1^{\prime}\widetilde P_4^{\prime}\widetilde P_3^{\prime},\\[1mm]
\angle \widetilde P_1^{\prime}\widetilde P_4\widetilde P_2=\angle \widetilde P_1^{\prime}\widetilde P_4\widetilde P_3=\angle \widetilde P_1^{\prime}\widetilde P_2^{\prime}\widetilde P_4^{\prime}=\angle \widetilde P_1^{\prime}\widetilde P_3^{\prime}\widetilde P_4^{\prime}
\end{array}
\end{equation}
(Figure \ref{conf_dual4-angles}). 
\begin{figure}[htbp]
\begin{center}
\includegraphics[width=.55\linewidth]{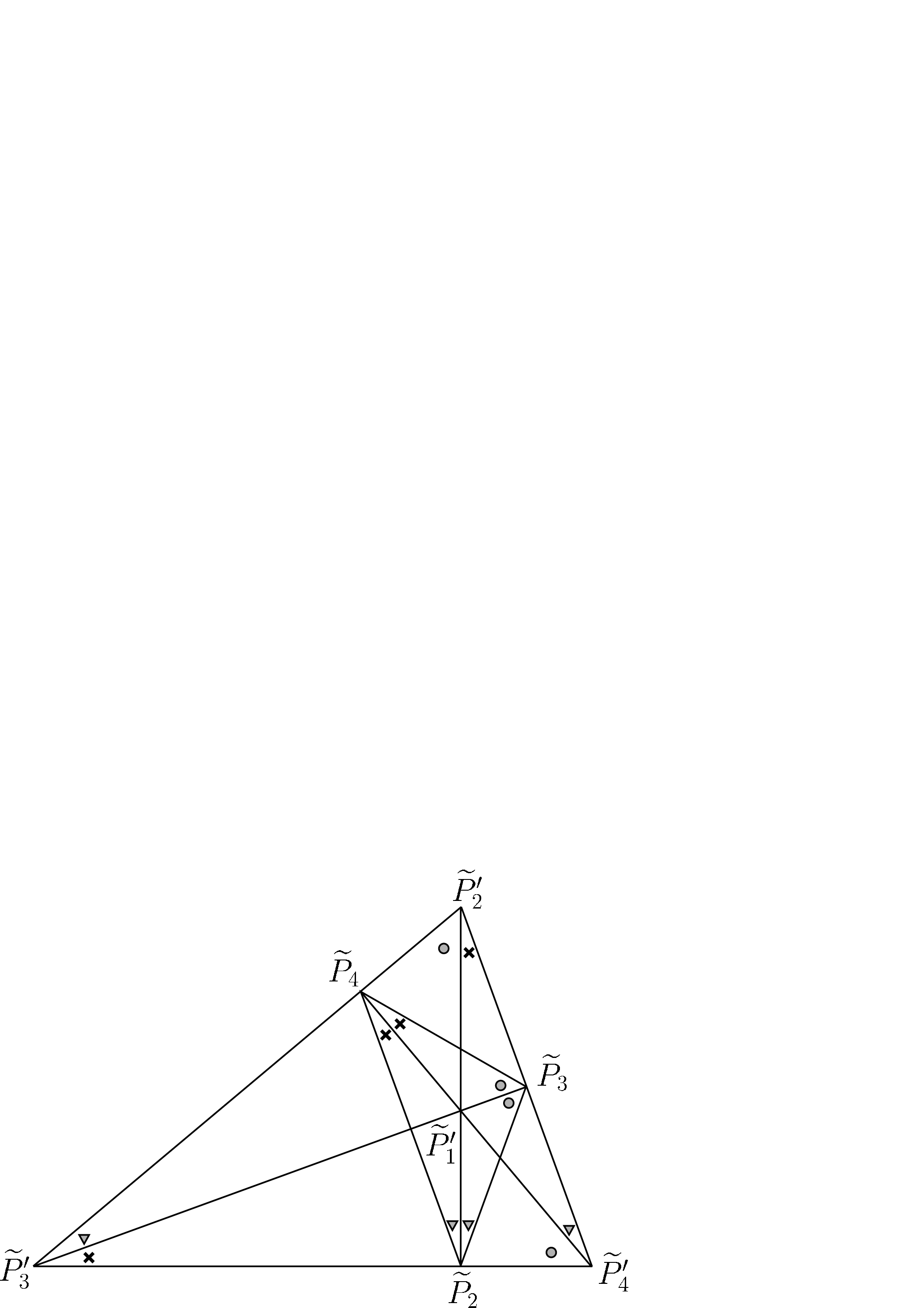}
\caption{The equalities between angles}
\label{conf_dual4-angles}
\end{center}
\end{figure}
The equality $\angle \widetilde P_1^{\prime}\widetilde P_2^{\prime}\widetilde P_3^{\prime}=\angle \widetilde P_1^{\prime}\widetilde P_4^{\prime}\widetilde P_3^{\prime}$ means that the inscribed angles for the chord $\widetilde P_1^{\prime}\widetilde P_3^{\prime}$ of two circles $\varGamma\big(\widetilde P_1^{\prime}, \widetilde P_2^{\prime}, \widetilde P_3^{\prime}\big)$ and $\varGamma\big(\widetilde P_1^{\prime}, \widetilde P_4^{\prime}, \widetilde P_3
^{\prime}\big)$ coincide (Figure \ref{conf_dual5-equiradius}), which implies that the radii of the two circles are equal. 
\begin{figure}[htbp]
\begin{center}
\includegraphics[width=.8\linewidth]{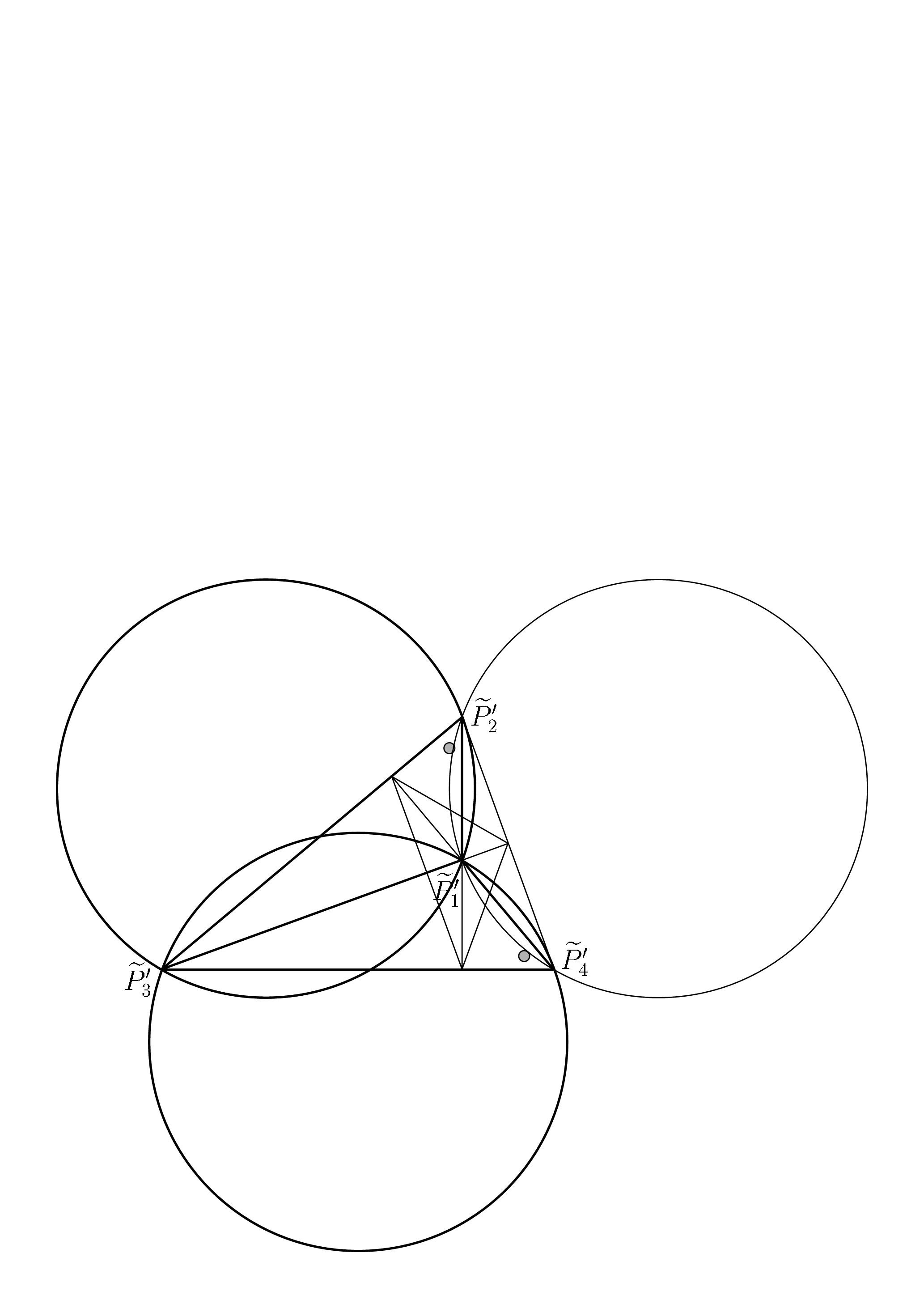}
\caption{$\angle \widetilde P_1^{\prime}\widetilde P_2^{\prime}\widetilde P_3^{\prime}=\angle \widetilde P_1^{\prime}\widetilde P_4^{\prime}\widetilde P_3^{\prime}\Longrightarrow r\big(\varGamma\big(\widetilde P_1^{\prime}, \widetilde P_2^{\prime}, \widetilde P_3^{\prime}\big)\big)=r\big(\varGamma\big(\widetilde P_1^{\prime}, \widetilde P_4^{\prime}, \widetilde P_3^{\prime}\big)\big)$ }
\label{conf_dual5-equiradius}
\end{center}
\end{figure}
\end{proof}
\begin{proofoftheorem} {\bf \ref{thm}}. 
Suppose $\mathcal{Q}^{\prime\prime}=\left(P_1^{\prime\prime}, P_2^{\prime\prime}, P_3^{\prime\prime}, P_4^{\prime\prime}\right)$ is the dual of $\mathcal{Q}^{\prime}=\left(P_1^{\prime}, P_2^{\prime}, P_3^{\prime}, P_4^{\prime}\right)$, which is the dual of $\mathcal{Q}=(P_1, P_2, P_3, P_4)$. 
We have only to show $\pi_1\big(P_1^{\prime\prime}\big)=\infty$. 

By definition, $\pi_1\big(P_1^{\prime\prime}\big)$ belongs to the intersection of the three circular angle bisectors of pairs of circles among $\varGamma\big(\widetilde P_1^{\prime}, \widetilde P_i^{\prime}, \widetilde P_j^{\prime}\big)$ $(\{i,j\}\subset\{2,3,4\})$. 
Since these three circles have the same radius by Lemma \ref{lem4-equiradius}, any circular angle bisector of a pair among them is a straight line (Figure \ref{conf_dual6-circ_ang_bisec}). 
\begin{figure}[htbp]
\begin{center}
\includegraphics[width=.9\linewidth]{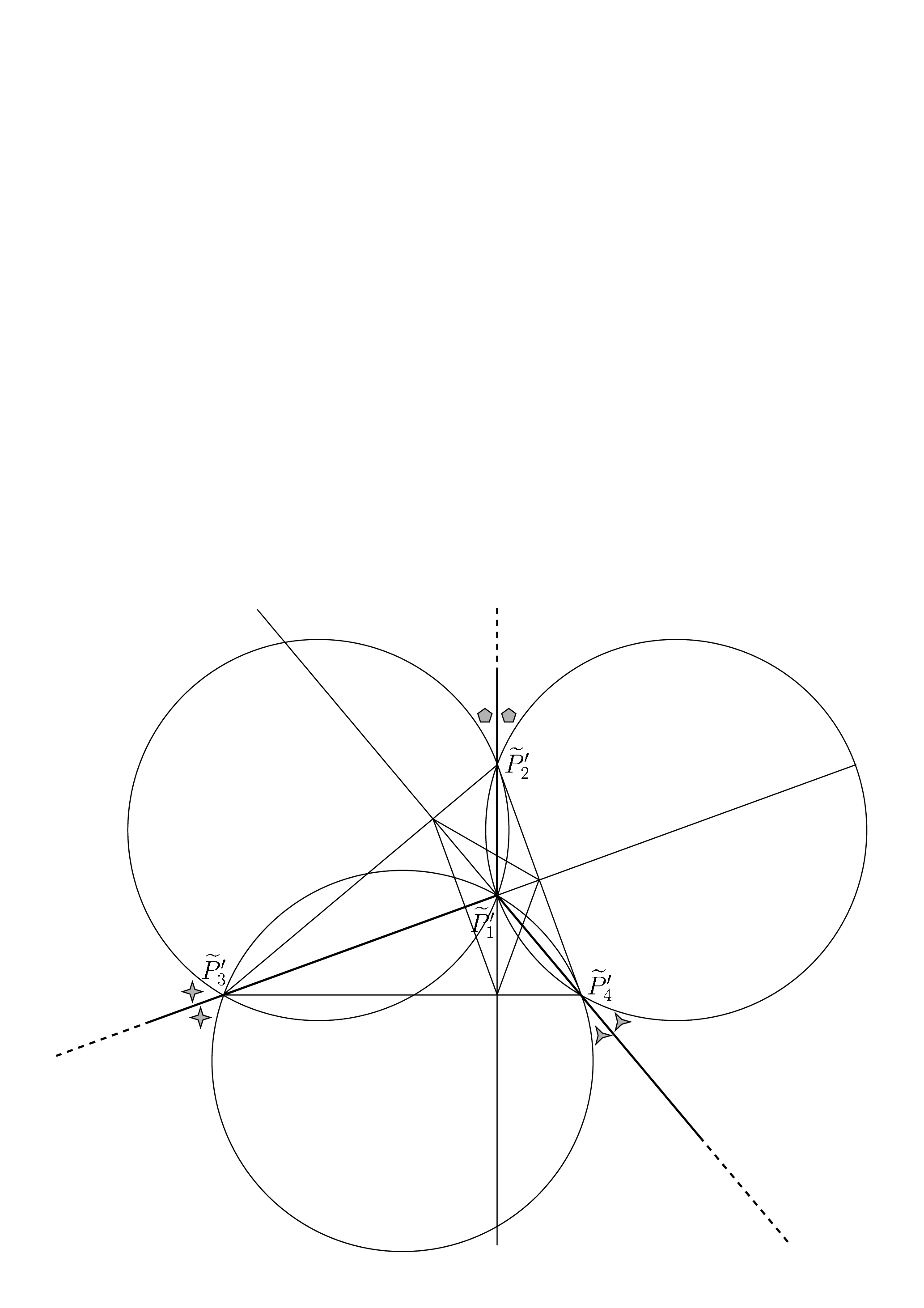}
\caption{The three circular angle bisectors}
\label{conf_dual6-circ_ang_bisec}
\end{center}
\end{figure}
Therefore, the three circular angle bisectors meet in $\widetilde P_1^{\prime}$ and $\infty$, which completes the proof. 
\end{proofoftheorem}
We remark we can also show $\pi_1\big(P_2^{\prime\prime}\big)=\widetilde P_2$. 
Figure \ref{conf_dual4'-cocircular} implies 
\begin{equation}\label{f_houbeki2}
\big|\widetilde P_2^{\prime}\widetilde P_3^{\prime}\big|\big|\widetilde P_2^{\prime}\widetilde P_4\big|
=\big|\widetilde P_2^{\prime}\widetilde P_2\big|\big|\widetilde P_2^{\prime}\widetilde P_1^{\prime}\big|
=\big|\widetilde P_2^{\prime}\widetilde P_4^{\prime}\big|\big|\widetilde P_2^{\prime}\widetilde P_3\big|. 
\end{equation}
Let $I^{\prime}$ be an inversion in a circle with center $\widetilde P_2^{\prime}$ with radius the square root of (\ref{f_houbeki2}). 
Then the formula (\ref{f_houbeki2}) implies that $I^{\prime}\big(\widetilde P_3^{\prime}\big)=\widetilde P_4, I^{\prime}\big(\widetilde P_4^{\prime}\big)=\widetilde P_3$, and $I^{\prime}\big(\widetilde P_1^{\prime}\big)=\widetilde P_2$. 
As $I^{\prime}\big(\pi_1\big(P_2^{\prime\prime}\big)\big)$ is the incenter of the triangle $\triangle I^{\prime}\big(\widetilde P_3^{\prime}\big)I^{\prime}\big(\widetilde P_4^{\prime}\big)I^{\prime}\big(\widetilde P_1^{\prime}\big)$, we have $I^{\prime}\big(\pi_1\big(P_2^{\prime\prime}\big)\big)=\widetilde P_1^{\prime}$, which implies $\pi_1\big(P_2^{\prime\prime}\big)=I^{\prime}\big(\widetilde P_1^{\prime}\big)=\widetilde P_2$. 

\subsection{Cross ratio of the dual quadruplet}
Suppose $\left(P_1^{\prime}, P_2^{\prime}, P_3^{\prime}, P_4^{\prime}\right)$ is the dual of $(P_1, P_2, P_3, P_4)$, and $\varSigma$ is a sphere through $P_i$ and $P_j^{\prime}$ as before. 
Let $p$ be a stereographic projection from $\varSigma$ to $\mathbb{C}\cup\{\infty\}$. 
We identify $P_i$ (or $P_j^{\prime}$) with the complex number $p(P_i)$ (or respectively $p(P_j^{\prime})$). 
\begin{theorem}\label{thm2}
The cross ratio of the dual quadruplet 
\[\mbox{\sl cr}^{\prime}=\frac{P_2^{\prime}-P_1^{\prime}}{P_2^{\prime}-P_4^{\prime}}\cdot\frac{P_3^{\prime}-P_4^{\prime}}{P_3^{\prime}-P_1^{\prime}}\]
is equal to the complex conjugate of the cross ratio of the original quadruplet 
\[\mbox{\sl cr}=\frac{P_2-P_1}{P_2-P_4}\cdot\frac{P_3-P_4}{P_3-P_1}\,.\]
\end{theorem}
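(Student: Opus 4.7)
The plan is to recycle the anti-Möbius map already built in the remark following the proof of Theorem~\ref{thm}, and to reduce the theorem to two elementary facts: inversions conjugate cross ratios, and the double transposition $(1\,2)(3\,4)$ preserves the cross ratio. By the conformal invariance of both the dual construction (Lemma~\ref{lem-conf_inv}) and of the cross ratio, we may work in the stereographic chart $\pi_1$; write $\widetilde P_j=\pi_1(P_j)$ and $\widetilde P_j^{\prime}=\pi_1(P_j^{\prime})$ as before, so that $\widetilde P_1=\infty$. It then suffices to verify $\mbox{\sl cr}^{\prime}=\overline{\mbox{\sl cr}}$ as an identity of complex numbers in $\mathbb{C}\cup\{\infty\}$.

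The key observation is that the inversion $I^{\prime}$ constructed in the remark already does almost all of the work. That remark exhibits an inversion centered at $\widetilde P_2^{\prime}$, of radius the common square root of the three equal products in (\ref{f_houbeki2}), that sends
\[
\widetilde P_1^{\prime}\mapsto\widetilde P_2,\qquad
\widetilde P_3^{\prime}\mapsto\widetilde P_4,\qquad
\widetilde P_4^{\prime}\mapsto\widetilde P_3,
\]
and, since $\widetilde P_2^{\prime}$ is the center of $I^{\prime}$, also sends $\widetilde P_2^{\prime}$ to $\infty=\widetilde P_1$. Hence $I^{\prime}$ carries the ordered dual quadruplet $(\widetilde P_1^{\prime},\widetilde P_2^{\prime},\widetilde P_3^{\prime},\widetilde P_4^{\prime})$ to $(\widetilde P_2,\widetilde P_1,\widetilde P_4,\widetilde P_3)$.

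The rest of the argument is formal. Inversion is anti-holomorphic on $\mathbb{C}\cup\{\infty\}$, and any anti-M\"obius transformation conjugates cross ratios; applied to the two quadruplets above,
\[
\mbox{\sl cr}(\widetilde P_2,\widetilde P_1,\widetilde P_4,\widetilde P_3)=\overline{\mbox{\sl cr}^{\prime}}.
\]
A direct check of the formula for $\mbox{\sl cr}$ shows that the permutation $(1,2,3,4)\mapsto(2,1,4,3)$ is one of the cross-ratio-preserving symmetries in $S_4$, so the left-hand side is just $\mbox{\sl cr}$, yielding $\mbox{\sl cr}^{\prime}=\overline{\mbox{\sl cr}}$ as desired.

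I do not anticipate a genuine obstacle: the proof repackages material already in place. The only point that needs verification is that a single inversion realizes all three swaps at once, i.e.\ that the three distance products in (\ref{f_houbeki2}) actually coincide and that each relevant pair of points lies on a common ray through $\widetilde P_2^{\prime}$; but this is precisely the content of the remark following Theorem~\ref{thm}.
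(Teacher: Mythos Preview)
Your proof is correct and takes a genuinely different route from the paper's. The paper establishes $\mbox{\sl cr}^{\prime}=\overline{\mbox{\sl cr}}$ by treating argument and modulus separately: the argument equality is read off from the angle identities~(\ref{f_angles}) in the incenter--excenter configuration, and the modulus equality is obtained via power-of-a-point relations together with a ratio of triangle areas. You instead observe that the inversion $I^{\prime}$ from the remark after Theorem~\ref{thm} carries the ordered dual quadruplet to the $(1\,2)(3\,4)$-permutation of the original, and then invoke the standard facts that anti-M\"obius maps conjugate cross ratios and that the double transposition $(1\,2)(3\,4)$ fixes the cross ratio. This is shorter and more conceptual---it explains \emph{why} complex conjugation appears (an orientation-reversing conformal map relates $\mathcal{Q}^{\prime}$ to $\mathcal{Q}$) rather than verifying it by chasing angles and lengths. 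The trade-off is that your argument leans on the content of the remark, in particular on~(\ref{f_houbeki2}) and on the collinearities $\widetilde P_2^{\prime},\widetilde P_4,\widetilde P_3^{\prime}$ and $\widetilde P_2^{\prime},\widetilde P_3,\widetilde P_4^{\prime}$ and $\widetilde P_2^{\prime},\widetilde P_1^{\prime},\widetilde P_2$ (the external and internal bisectors through the excenter $\widetilde P_2^{\prime}$), which are what make the single inversion realize all three swaps; the paper's proof is self-contained from~(\ref{f_angles}) and does not need that remark.
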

\begin{proof} 
First observe that the cross ratio is independent of a stereographic projection as far as it is orientation preserving. 
We may assum without loss of generality that $p=\pi_1$, $\widetilde P_1=\infty$, $\widetilde P_4=0$, $\widetilde P_2=1$, and $\widetilde P_3=\mbox{\sl cr}$. 

\smallskip
(i) Figure \ref{conf_dual4-angles} implies 
\[\arg(\mbox{\sl cr}^{\prime})=-\angle \widetilde P_1^{\prime}\widetilde P_2^{\prime}\widetilde P_4^{\prime}-\angle \widetilde P_4^{\prime}\widetilde P_3^{\prime}\widetilde P_1^{\prime}
=-\angle \widetilde P_2\widetilde P_4\widetilde P_3
=-\arg(\mbox{\sl cr}).\]

(ii) Since $\widetilde P_1^{\prime}, \widetilde P_2, \widetilde P_4^{\prime}$, and $\widetilde P_3$ are cocircular, we have 
\[\frac{\big|\widetilde P_2^{\prime}-\widetilde P_1^{\prime}\big|}{\big|\widetilde P_2^{\prime}-\widetilde P_4^{\prime}\big|}
=\frac{\big|\widetilde P_2^{\prime}-\widetilde P_3\big|}{\big|\widetilde P_2^{\prime}-\widetilde P_2\big|} \>\>\mbox{ and }\>\>
\frac{\big|\widetilde P_3^{\prime}-\widetilde P_4^{\prime}\big|}{\big|\widetilde P_3^{\prime}-\widetilde P_1^{\prime}\big|}
=\frac{\big|\widetilde P_3^{\prime}-\widetilde P_3\big|}{\big|\widetilde P_3^{\prime}-\widetilde P_2\big|}\,. \]
Since $\angle \widetilde P_2^{\prime}\widetilde P_3\widetilde P_3^{\prime}=\angle \widetilde P_2^{\prime}\widetilde P_2\widetilde P_3^{\prime}=\frac{\pi}2$ we have 
\[|\mbox{\sl cr}^{\prime}|=\frac{\big|\widetilde P_2^{\prime}-\widetilde P_1^{\prime}\big|}{\big|\widetilde P_2^{\prime}-\widetilde P_4^{\prime}\big|}\cdot\frac{\big|\widetilde P_3^{\prime}-\widetilde P_4^{\prime}\big|}{\big|\widetilde P_3^{\prime}-\widetilde P_1^{\prime}\big|}
=\frac{\big|\widetilde P_2^{\prime}-\widetilde P_3\big|\cdot \big|\widetilde P_3^{\prime}-\widetilde P_3\big|}{\big|\widetilde P_2^{\prime}-\widetilde P_2\big|\cdot \big|\widetilde P_3^{\prime}-\widetilde P_2\big|}
=\frac{\big|\triangle\widetilde P_2^{\prime}\widetilde P_3^{\prime}\widetilde P_3\big|}{\big|\triangle\widetilde P_2^{\prime}\widetilde P_3^{\prime}\widetilde P_2\big|}\,. \]
As $\angle \widetilde P_3\widetilde P_4\widetilde P_2^{\prime}=\angle \widetilde P_2\widetilde P_4\widetilde P_3^{\prime}$ the right hand side of the above is equal to 
\[\frac{\big|\widetilde P_4-\widetilde P_3\big|}{\big|\widetilde P_4-\widetilde P_2\big|}=|\mbox{\sl cr}|,\]
which completes the proof. 
\end{proof}

\section{The cocircular case}\label{sec_cocircular}
We define the conformal dual of a quadruplet of cocircular points $P_i$ by the limit of the conformal dual of a quadruplet of non-cocircular points $X_i$ as $X_i$ approach $P_i$. 

Suppose $\widetilde P_4=0, \widetilde P_2=1, \widetilde P_3=c$, and $\widetilde P_1=\infty$ $(0<c<1)$ after a stereographic projection from $\varSigma$ to $\mathbb C\cup\{\infty\}$. 
Let $\{\widetilde X_n\}$ be a sequence of points in $\mathbb C\setminus\mathbb R$ which approach $c$ as $n$ goes to $\infty$. 
Figure \ref{conf_dual_cocirc} implies that the dual of $\left(\infty, 0, \widetilde X_n, 1\right)$ tends to be $\left(c, 1, \infty, 0\right)$ as $n$ goes to $\infty$. 
\begin{figure}[htbp]
\begin{center}
\includegraphics[width=.9\linewidth]{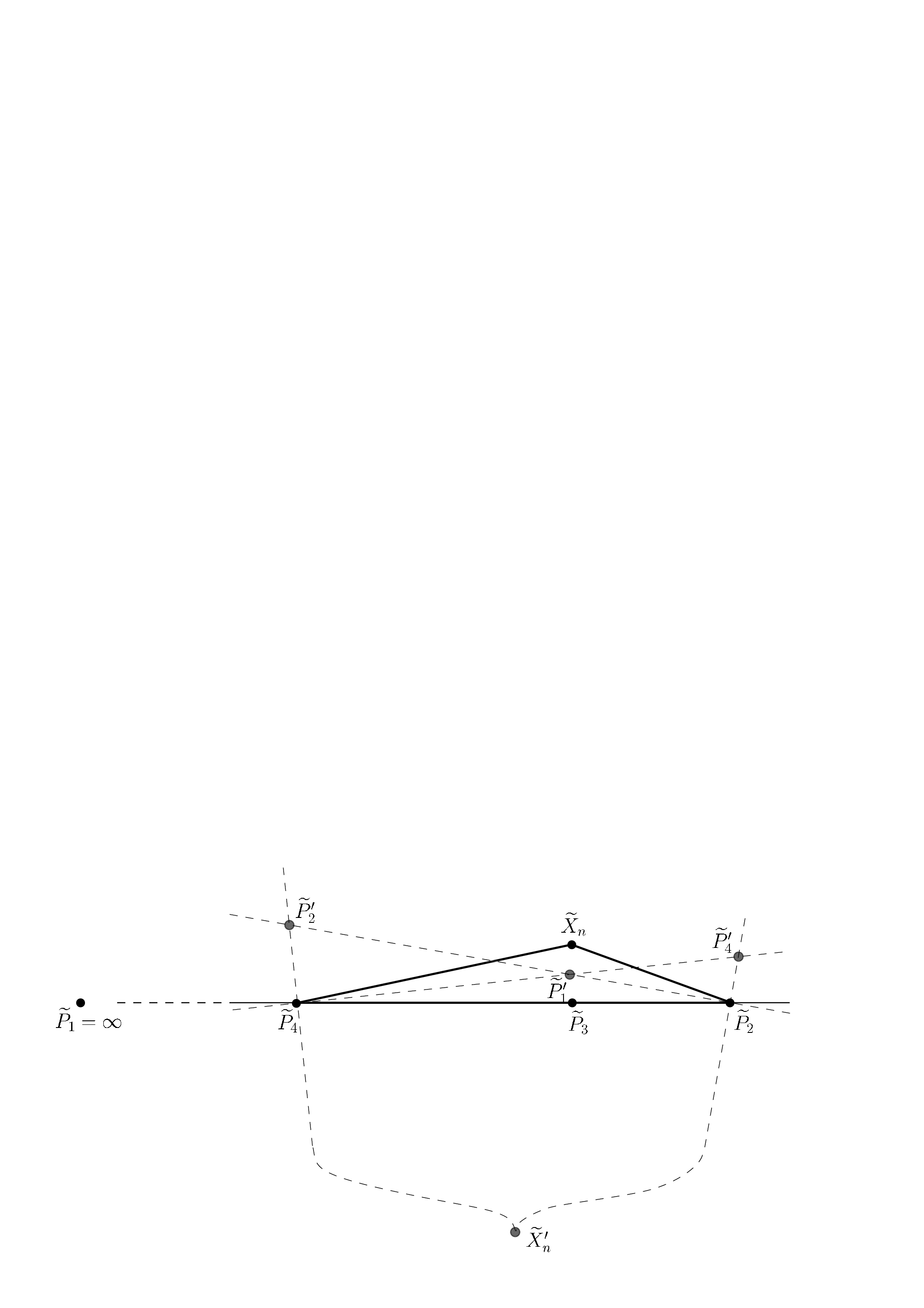}
\caption{The four dotted lines are angle bisectors}
\label{conf_dual_cocirc}
\end{center}
\end{figure}

Thus we are lead to 
\begin{definition} \rm 
Suppose $P_1, P_2, P_3$, and $P_4$ are points on a circle in this order. 
Then the dual of $(P_1, P_2, P_3, P_4)$ is given by $(P_3, P_4, P_1, P_2)$. 
Namely, diagonal points are being exchanged. 
\end{definition}

It is obvious that both Theorem \ref{thm} and Theorem \ref{thm2} also hold for cocircular cases.



\bigskip \noindent

\bigskip \noindent
Department of Mathematics, Tokyo Metropolitan University, \\
1-1 Minami-Ohsawa, Hachiouji-Shi, Tokyo 192-0397, JAPAN. \\
E-mail: ohara@tmu.ac.jp
\end{document}